%%--------------------------------------------------------
%% example.tex
%%
%%--------------------------------------------------------
\documentclass[12pt]{article}

%%%%%%%%%%%%%%%%%%%%%%%%%%%%%%%%%%%%%%%%%%%%%%%%%%%%%%%%%%%%%%%%%%
%%% Comment out to use biblatex instead of bibtex
%%%%%%%%%%%%%%%%%%%%%%%%%%%%%%%%%%%%%%%%%%%%%%%%%%%%%%%%%%%%%%%%%%
%\def\UseBibLatex{1}

%%%%%%%%%%%%%%%%%%%%%%%%%%%%%%%%%%%%%%%%%%%%%%%%%%%%%%%%%%%%%%%%%
% Put all your private style files/class style files in the styles/
% subdirectory. The following command guarantee that latex would find
% it.
%%%%%%%%%%%%%%%%%%%%%%%%%%%%%%%%%%%%%%%%%%%%%%%%%%%%%%%%%%%%%%%%%

\makeatletter
\def\input@path{{styles/}}
\makeatother

%%%%%%%%%%%%%%%%%%%%%%%%%%%%%%%%%%%%%%%%%%%%%%%%%%%%%%%%%%%%%%%%%%
% A modified usepackge command that checks for style files in the
% styles/ subdirectory.
%%%%%%%%%%%%%%%%%%%%%%%%%%%%%%%%%%%%%%%%%%%%%%%%%%%%%%%%%%%%%%%%%% 
\newcommand{\UsePackage}[1]{%
  \IfFileExists{styles/#1.sty}{%
      \usepackage{styles/#1}%
   }{%
      \IfFileExists{../styles/#1.sty}{%
         \usepackage{../styles/#1}%
      }{%
         \usepackage{#1}%
      }%
   }%
}

\usepackage{natbib}
\usepackage[T1]{fontenc}
\usepackage{lmodern}
\usepackage{textcomp}

\usepackage{amsmath}%
\usepackage{amssymb}%
\usepackage[table]{xcolor}%

\setlength{\marginparwidth}{6cm} 
\usepackage{todonotes}
\usepackage[in]{fullpage}%

\usepackage[amsmath,thmmarks]{ntheorem}%
\theoremseparator{.}%

\usepackage{titlesec}%
\titlelabel{\thetitle. }%
\usepackage{xcolor}%
\usepackage{mleftright}%
\usepackage{xspace}%
\usepackage{graphicx}
\usepackage{hyperref}%
\usepackage[parfill]{parskip}
\usepackage{bbm}

\usepackage{hyperref}%
\hypersetup{%
      unicode,
      breaklinks,%
      colorlinks=true,%
      urlcolor=[rgb]{0.25,0.0,0.0},%
      linkcolor=[rgb]{0.5,0.0,0.0},%
      citecolor=[rgb]{0,0.2,0.445},%
      filecolor=[rgb]{0,0,0.4},
      anchorcolor=[rgb]={0.0,0.1,0.2}%
}
\usepackage[ocgcolorlinks]{ocgx2}

%%%%%%%%%%%%%%%%%%%%%%%%%%%%%%%%%%%%%%%%%%%%%%%%%%%%%%%%%%%%%%%%%%%%%%%%
% Defining theorem like environments
%

\theoremseparator{.}%

\theoremstyle{plain}%
\newtheorem{theorem}{Theorem}

\newtheorem{lemma}{Lemma}

\theoremstyle{plain}%
\theoremheaderfont{\sf} \theorembodyfont{\upshape}%
\newtheorem*{remark:unnumbered}[theorem]{Remark}%
\newtheorem{remark}[section]{Remark}%

%

% Proof environment
\newcommand{\myqedsymbol}{\rule{2mm}{2mm}}

\theoremheaderfont{\em}%
\theorembodyfont{\upshape}%
\theoremstyle{nonumberplain}%
\theoremseparator{}%
\theoremsymbol{\myqedsymbol}%
\newtheorem{proof}{Proof:}%

%

% theorem block end
%%%%%%%%%%%%%%%%%%%%%%%%%%%%%%%%%%%%%%%%%%%%%%%%%%%%%%%%%%%%%%%%%%%%

%%%%%%%%%%%%%%%%%%%%%%%%%%%%%%%%%%%%%%%%%%%%%%%%%%%%%%%%%%%%%%%%%% 5
% Color emph

\providecommand{\emphind}[1]{}%
\renewcommand{\emphind}[1]{\emph{#1}\index{#1}}

\definecolor{blue25emph}{rgb}{0, 0, 11}

\providecommand{\emphic}[2]{}
\renewcommand{\emphic}[2]{\textcolor{blue25emph}{%
      \textbf{\emph{#1}}}\index{#2}}

\providecommand{\emphi}[1]{}%
\renewcommand{\emphi}[1]{\emphic{#1}{#1}}

\definecolor{almostblack}{rgb}{0, 0, 0.3}

\providecommand{\emphw}[1]{}%
\renewcommand{\emphw}[1]{{\textcolor{almostblack}{\emph{#1}}}}%

\providecommand{\emphOnly}[1]{}%
\renewcommand{\emphOnly}[1]{\emph{\textcolor{blue25}{\textbf{#1}}}}

% Color emph - end
%%%%%%%%%%%%%%%%%%%%%%%%%%%%%%%%%%%%%%%%%%%%%%%%%%%%%%%%%%%%%%%%%% 5

%%%%%%%%%%%%%%%%%%%%%%%%%%%%%%%%%%%%%%%%%%%%%%%%%%%%%%%%%%%%%%%%%%%
% Authors thanks
%%%%%%%%%%%%%%%%%%%%%%%%%%%%%%%%%%%%%%%%%%%%%%%%%%%%%%%%%%%%%%%%%%%

%%%%%%%%%%%%%%%%%%%%%%%%%%%%%%%%%%%%%%%%%%%%%%%%%

%%%%%%%%%%%%%%%%%%%%%%%%%%%%%%%%%%%%%%%%%%%%%%%%%%%%%%%%%%%%%%%%%%%%%%
%    Handling references
%%%%%%%%%%%%%%%%%%%%%%%%%%%%%%%%%%%%%%%%%%%%%%%%%%%%%%%%%%%%%%%%%%%%%%

\newcommand{\HLink}[2]{\hyperref[#2]{#1~\ref*{#2}}}
\newcommand{\HLinkSuffix}[3]{\hyperref[#2]{#1\ref*{#2}{#3}}}

\providecommand{\deflab}[1]{}
\renewcommand{\deflab}[1]{\label{def:#1}}

\providecommand{\eqlab}[1]{}%
\renewcommand{\eqlab}[1]{\label{equation:#1}}

%%%%%%%%%%%%%%%%%%%%%%%%%%%%%%%%%%%%%%%%%%%%%%%%%%%%%%%%%%%%%%%%%%%

\newcommand{\remove}[1]{}%

%

%

%

%

%
%

%%%%%%%%%%%%%%%%%%%%%%%%%%%%%%%%%%%%%%%%%%%%%%%%%%%%%%%%%%%%%%%%%%%%%%%%%
% Defining comptenum environment using enumitem
\usepackage[inline]{enumitem}

\newlist{compactenumA}{enumerate}{5}%
\setlist[compactenumA]{topsep=0pt,itemsep=-1ex,partopsep=1ex,parsep=1ex,%
   label=(\Alph*)}%

\newlist{compactenuma}{enumerate}{5}%
\setlist[compactenuma]{topsep=0pt,itemsep=-1ex,partopsep=1ex,parsep=1ex,%
   label=(\alph*)}%

\newlist{compactenumI}{enumerate}{5}%
\setlist[compactenumI]{topsep=0pt,itemsep=-1ex,partopsep=1ex,parsep=1ex,%
   label=(\Roman*)}%

\newlist{compactenumi}{enumerate}{5}%
\setlist[compactenumi]{topsep=0pt,itemsep=-1ex,partopsep=1ex,parsep=1ex,%
   label=(\roman*)}%

\newlist{compactitem}{itemize}{5}%
\setlist[compactitem]{topsep=0pt,itemsep=-1ex,partopsep=1ex,parsep=1ex,%
   label=\ensuremath{\bullet}}%

%%%%%%%%%%%%%%%%%%%%%%%%%%%%%%%%%%%%%%%%%%%%%%%%%%%%%%%%%%%%%%%%%%%%%%%%%%

%%%%%%%%%%%%%%%%%%%%%%%%%%%%%%%%%%%%%%%%%%%%%%%%%%%%%%%%%%%%%%%%%%%
% Biblatex....
%
\providecommand{\BibLatexMode}[1]{}
\providecommand{\BibTexMode}[1]{}

\ifx\UseBibLatex\undefined%
  \renewcommand{\BibLatexMode}[1]{}
  \renewcommand{\BibTexMode}[1]{#1}
\else
  \renewcommand{\BibLatexMode}[1]{#1}
  \renewcommand{\BibTexMode}[1]{}
\fi

% Bib latex stuff
\BibLatexMode{%
   \usepackage[bibencoding=utf8,style=alphabetic,backend=biber]{biblatex}%
   \UsePackage{my_biblatex}%
}

%
%%%%%%%%%%%%%%%%%%%%%%%%%%%%%%%%%%%%%%%%%%%%%%%%%%%%%%%%%%%%%%%%%%%

\numberwithin{figure}{section}%
\numberwithin{table}{section}%
% \numberwithin{equation}{section}%

%%%%%%%%%%%%%%%%%%%%%%%%%%%%%%%%%%%%%%%%%%%%%%%%%%%%%%%%%%%%%%%%%%%
%%%%%%%%%%%%%%%%%%%%%%%%%%%%%%%%%%%%%%%%%%%%%%%%%%%%%%%%%%%%%%%%%%%
% Papers specific commands...
%%%%%%%%%%%%%%%%%%%%%%%%%%%%%%%%%%%%%%%%%%%%%%%%%%%%%%%%
%%%%%%%%%%%%%%%%%%%%%%%%%%%%%%%%%%%%%%%%%%%%%%%%%%%%%%%%

%%%%%%%%%%%%%%%%%%%%%%%%%%%%%%%%%%%%%%%%%%%%%%%%%%%%%%%%
%%BeginIpePreamble
%%%%%%%%%%%%%%%%%%%%%%%%%%%%%%%%%%%%%%%%%%%%%%%%%%%%%%%%

%%%%%%%%%%%%%%%%%%%%%%%%%%%%%%%%%%%%%%%%%%%%%%%%%%%%%%%%
%%EndIpePreamble
%%%%%%%%%%%%%%%%%%%%%%%%%%%%%%%%%%%%%%%%%%%%%%%%%%%%%%%%
%
\usepackage{tikz}
\newcommand{\KL}{\textsf{KL}}
\newcommand{\comp}{\mathsf{c}}

\BibLatexMode{%
   \bibliography{template}
}

\begin{document}

\title{Tight Bounds on the Binomial CDF, and the Minimum of i.i.d Binomials, in terms of KL-Divergence}

\author{Xiaohan Zhu\thanks{Department of Statistics, The University of Chicago} \and 
Mesrob I. Ohannessian\thanks{Department of Electrical and Computer Engineering, The University of Illinois at Chicago} \and Nathan Srebro\thanks{Toyota Technological Institute at Chicago}}

% \author{%
%    James Bond%
%    \JamesThanks{Bla bla.}%
%    %
%    \and%
%    %
%    Ford Perfect%
%    \thanks{Bla bla.}%
% }
\date{}
\maketitle

\begin{abstract}
    We provide finite sample upper and lower bounds on the Binomial tail probability which are a direct application of Sanov's theorem. We then use these to obtain high probability upper and lower bounds on the minimum of i.i.d.~Binomial random variables.  Both bounds are finite sample, asymptotically tight, and expressed in terms of the KL-divergence.
\end{abstract}

The purpose of this note is to provide, in a self-contained and concise way, both upper {\em and} lower bounds on the Binomial tail, and through that, on the minimum of i.i.d.~Binomial random variables.  The upper bound on the minimum of i.i.d.~Binomials can be seen as a special case of known uniform concentration guarantees.  Our main purpose here, beyond providing an explicit, simple, and self-contained presentation, is to provide also simple matching {\em lower} bounds.  Such lower bounds can then be used in constructing and analyzing lower bounds for different learning methods and situations. 

% \section{finite sample bound on the binomial CDF}
We first prove a finite sample bound on the Binomial cumulative distribution function (CDF) by using Sanov's theorem. Throughout, all logarithms are base-$2$ and KL divergence is measured in bits.
\begin{lemma}[Binomial tail]
\label{lemma}
    Let $X \sim \frac{1}{n}\textnormal{Bin}(n, p)$ be a scaled Binomial random variable. Then for $a \leq p$,
    % \[
    % \log \mathbb{P}(X \leq a) \in -n \KL(a\Vert p) \pm \left(4\log(n+1) + \left|\log \frac{p}{1-p}\right|\right),
    % \]
    \[
    \log \mathbb{P}(X \leq a) \in -n \KL(a\Vert p) \pm \left(4\log(n+1) + \left[\log \frac{p}{1-p}\right]_+\right),
    \]   where $\KL(\alpha\Vert \beta)$ denotes $\KL(Ber(\alpha)\Vert Ber(\beta)) = \alpha\log\frac{\alpha}{\beta} + (1-\alpha)\log\frac{1-\alpha}{1-\beta}$.
\end{lemma}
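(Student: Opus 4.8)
The plan is to read $\mathbb P(X\le a)$ as a Sanov/method-of-types probability and split the argument into a clean ``single dominant type'' estimate followed by a discretization correction. Write $H$ for the binary entropy (base $2$) and set $m=\lfloor na\rfloor$, so that $\{X\le a\}=\{nX\le m\}=\bigsqcup_{k=0}^{m}\{nX=k\}$ is a disjoint union of at most $n+1$ type classes, each of probability $\binom nk p^k(1-p)^{n-k}$. Using the identity $\binom nk p^k(1-p)^{n-k}=\tfrac{\binom nk}{2^{nH(k/n)}}\,2^{-n\KL(k/n\Vert p)}$ together with the elementary bounds $\tfrac1{n+1}2^{nH(k/n)}\le\binom nk\le 2^{nH(k/n)}$, every term lies in $\big[\tfrac1{n+1}2^{-n\KL(k/n\Vert p)},\,2^{-n\KL(k/n\Vert p)}\big]$; and since $a\le p$ and $\KL(\cdot\Vert p)$ decreases on $[0,p]$, the term $k=m$ is both the largest and the one with the smallest exponent, i.e.\ $\KL(m/n\Vert p)=\min_{k\le m}\KL(k/n\Vert p)$.

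Bounding the sum above by $n+1$ times its largest term and below by that term alone then gives
\[
-\log(n+1)-n\KL(\tfrac mn\Vert p)\ \le\ \log\mathbb P(X\le a)\ \le\ \log(n+1)-n\KL(\tfrac mn\Vert p).
\]
For the upper half of the claim this is already enough: $\KL(m/n\Vert p)\ge\KL(a\Vert p)$ immediately yields $\log\mathbb P(X\le a)\le\log(n+1)-n\KL(a\Vert p)$, comfortably inside the stated band.

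The crux is the lower bound, where I must trade $\KL(m/n\Vert p)$ for $\KL(a\Vert p)$ and absorb the rounding $a\mapsto m/n$. The identity I would exploit is
\[
n\big[\KL(\tfrac mn\Vert p)-\KL(a\Vert p)\big]=n\big[H(a)-H(\tfrac mn)\big]+n\big(a-\tfrac mn\big)\log\tfrac p{1-p},
\]
found by expanding both divergences and collecting the coefficients of $\log p$ and $\log(1-p)$. Since $0\le a-m/n<1/n$, the last summand is at most $[\log\frac p{1-p}]_+$ (and is $\le 0$ when $p\le\tfrac12$), which is exactly the $[\log\frac p{1-p}]_+$ appearing in the statement. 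The entropy difference is the genuine obstacle, because $H'$ blows up at $0$ so a single tangent bound is not uniform: I would control it by concavity of $H$ (tangent-line bound at $m/n$ combined with $n(a-m/n)<1$), giving $n[H(a)-H(\tfrac mn)]\le\log n$ for $m\ge 1$, and dispatch the boundary case $m=0$ (where $a<1/n$) by the direct estimate $nH(a)=O(\log n)$. This caps the gap by $2\log(n+1)+[\log\frac p{1-p}]_+$, so that $\log\mathbb P(X\le a)\ge -n\KL(a\Vert p)-3\log(n+1)-[\log\frac p{1-p}]_+$; together with the upper bound this places $\log\mathbb P(X\le a)$ within $-n\KL(a\Vert p)\pm\big(4\log(n+1)+[\log\frac p{1-p}]_+\big)$, with slack to spare. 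The only delicate point is the uniform $O(\log n)$ control of the entropy gap near the boundaries $0,\tfrac12,1$.
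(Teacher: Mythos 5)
Your proposal is correct and follows essentially the same route as the paper: the paper invokes Sanov's theorem and the type-class lower bound from Cover--Thomas for exactly the decomposition into types that you write out explicitly for the binary alphabet, and then performs the same discretization step via the identity $\KL(\tilde a\Vert p)-\KL(a\Vert p)=(a-\tilde a)\log\frac{p}{1-p}+H(a)-H(\tilde a)$ with an $O(\frac{\log n}{n})$ control of the entropy gap (the paper uses $H(a)-H(\tilde a)\le H(1/n)$ where you use a tangent-line bound, but both land in the same place). Your self-contained version even yields slightly better constants ($\log(n+1)$ in place of $2\log(n+1)$ on each side), so it sits comfortably within the stated band.
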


\begin{proof}
    We write $X = \frac{1}{n}\sum_{i = 1}^n X_i$, where $X_i \overset{\mathrm{iid}}{\sim} \textnormal{Ber}(p)$, and so $X_1, X_2, \cdots, X_n$ is a sequence of $n$ symbols from the alphabet $\mathcal{X} = \{0,1\}$ with type $(1-X,X)$. Denote the true distribution $Q=\textnormal{Ber}(p)$. 
    
    The upper bound follows directly from Sanov's theorem \citep{TC}:
    \begin{equation}\label{eqn1}
        \log \mathbb{P}(X \leq a) \leq -n \KL(a\Vert p) + 2\log(n+1).
    \end{equation}
    To get a finite sample lower bound, we round $a$ to a multiple of $1/n$.  That is, let $k=\lfloor a n \rfloor$ and $\tilde{a}=k/n$, so that $a-1/n < \tilde{a} \leq a$.
    
    % To get a matching finite sample lower bound, consider $a_n = k_n/n$, for some integer $k_n \in \mathbb{N}$ such that $|a_n - a| \leq 1/n$ and $a_n \leq a$. 
    Let $\mathcal{P}_n = \{(P(0), P(1)): (\frac{0}{n}, \frac{n}{n}),(\frac{1}{n}, \frac{n-1}{n}), \cdots, (\frac{n}{n}, \frac{0}{n}) \}$ be the set of types with denominator $n$, and $E = \{P: P(1) \leq a\}$. Then the type $P_{\tilde{a}} = (1-\tilde{a}, \tilde{a})$ lies in the intersection $E \cap \mathcal{P}_n$.

   Given the type $P \in \mathcal{P}_n$, let $T(P) = \{x \in \mathcal{X}^n: P_x = P\}$ denote the type class of $P$, which is the set of sequences of length $n$ and and type $P$. Then, by adapting equations (11.104) to (11.106) in the lower bound proof of \cite{TC}, we have:
    
    \begin{equation*}
         \begin{split}
        \mathbb{P}(X \leq a) = Q^n(E) &= \sum_{P \in E \cap \mathcal{P}_n} Q^n\left( T \left( P \right) \right)\\
        &\geq Q^n \left(T\left(P_{\tilde{a}}\right)\right)\\
        &\geq \frac{1}{(n+1)^2} 2^{-n\KL(\tilde{a}\Vert p)}.
    \end{split}
    \end{equation*}

    Taking the logarithm on both sides yields:
    \begin{equation}
     \log \mathbb{P}(X \leq a) \geq -2\log(n+1) - n\KL(\tilde{a}\Vert p). \tag{*} \label{*}
    \end{equation}
     
    % Since $a - \tilde{a} < 1/n$, we have $\left|\KL(a\Vert p) - \KL(\tilde{a}\Vert p)\right|= \left|(\tilde{a} - a)\log \frac{p}{1-p} + H(a) - H(\tilde{a}) \right|\\ \leq |\tilde{a} - a| \left| \log \frac{p}{1-p} \right| + \left|H(a) - H(\tilde{a}) \right| \leq \frac{1}{n} \left| \log \frac{p}{1-p} \right| + H(\frac{1}{n}) \leq \frac{1}{n} \left| \log \frac{p}{1-p} \right| + \frac{2\log n}{n} $. 
    
    % Plugging this to the inequality \eqref{*} yields that
    % \begin{align}
    %     \log \mathbb{P}(X \leq a) &\geq -2\log(n+1) - n\KL(\tilde{a}\Vert p)\notag\\
    %     &\geq -2\log(n+1) - \left(n\KL(a\Vert p) + 2\log n + |\log \frac{p}{1-p}| \right)\notag\\
    %     &\geq -n \KL(a\Vert p) - 4\log(n+1) - \left|\log \frac{p}{1-p}\right|.\label{eq:cdflower}
    % \end{align}

        Since $a - \tilde{a} < 1/n$, $H(a) - H(\tilde{a}) < H(\frac{1}{n}) < \frac{2}{n} \log n$. This implies that $\KL(\tilde a\Vert p) - \KL(a\Vert p)= (a-\tilde{a})\log \frac{p}{1-p} + H(a) - H(\tilde{a}) \leq \frac{1}{n} \left[ \log \frac{p}{1-p} \right]_+ + \frac{2}{n} \log n$. Plugging this in the inequality \eqref{*} yields
    \begin{align}
        \log \mathbb{P}(X \leq a) &\geq -2\log(n+1) - n\KL(\tilde{a}\Vert p)\notag\\
        &\geq -2\log(n+1) - \left(n\KL(a\Vert p) + 2\log n + \left[\log \frac{p}{1-p}\right]_+ \right)\notag\\
        &\geq -n \KL(a\Vert p) - 4\log(n+1) - \left[\log \frac{p}{1-p}\right]_+.\label{eq:cdflower}
    \end{align}
    
    The upper bound \eqref{eqn1} and lower bound \eqref{eq:cdflower} together yield the desired result.
\end{proof}

Next, we use the finite sample bound on the Binomial CDF to prove the following concentration bounds of the minimum of i.i.d Binomials in terms of KL divergence.

\begin{theorem}[minimum of i.i.d Binomial]\label{thm1}
    Let $\{X_i\}_{i = 1}^r \overset{\mathrm{iid}}{\sim} \frac{1}{n}\textnormal{Bin}(n, p)$, $Z = \min_{i = 1, \cdots, r} X_i$. Given fixed confidence parameter $\delta \in (0,1)$, let $\Delta(\delta, p, n) = \log \frac{1}{\delta/2} + 4\log (n+1) + \left[ \log \frac{p}{1-p} \right]_+ $. If $\Delta(\delta, p, n) < \log r$, then with probability $1-\delta$, we have
    \[
    Z < p \text{, and } \KL(Z\Vert p) \in \frac{\log r \pm \Delta(\delta, p, n)}{n},
    \]
    except that if $\KL(0 \Vert p) < \frac{\log r -\Delta(\delta, p, n)}{n}$, then with probability $1-\delta$, $Z = 0$.
\end{theorem}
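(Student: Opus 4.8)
The plan is to reduce both tails of $Z$ to the single-Binomial bound of Lemma~\ref{lemma}: the union bound controls the event that the minimum is too small, and independence controls the event that it is too large, after which monotonicity of $a \mapsto \KL(a\Vert p)$ on $[0,p]$ converts statements about $Z$ into statements about $\KL(Z\Vert p)$. I would abbreviate $C = 4\log(n+1) + [\log\tfrac{p}{1-p}]_+$, so that $\Delta = \log\tfrac{2}{\delta} + C$ and Lemma~\ref{lemma} reads $\mathbb{P}(X_1\le a) \in 2^{-n\KL(a\Vert p)\pm C}$ for $a\le p$. The one structural fact I would record first is that on $[0,p]$ the map $a\mapsto\KL(a\Vert p)$ is continuous and strictly decreasing from $\KL(0\Vert p) = -\log(1-p)$ down to $0$, hence a bijection onto $[0,\KL(0\Vert p)]$; targets above $\KL(0\Vert p)$ are simply not realizable, and this is what generates the $Z=0$ dichotomy.

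For the upper bound on $\KL(Z\Vert p)$ (equivalently, $Z$ is not too small), I would split on realizability. If $\tfrac{\log r+\Delta}{n}\le\KL(0\Vert p)$, pick $a\in[0,p]$ with $\KL(a\Vert p)=\tfrac{\log r+\Delta}{n}$; the union bound and Lemma~\ref{lemma} then give $\mathbb{P}(Z\le a)\le r\,\mathbb{P}(X_1\le a)\le r\,2^{-n\KL(a\Vert p)+C}=\delta/2$, so with probability $\ge 1-\delta/2$ we have $Z>a$. If instead $\tfrac{\log r+\Delta}{n}>\KL(0\Vert p)$, then $\KL(Z\Vert p)\le\KL(0\Vert p)<\tfrac{\log r+\Delta}{n}$ holds automatically once $Z\le p$, spending no budget. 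Either way, as soon as we also know $Z\le p$, monotonicity yields $\KL(Z\Vert p)\le\tfrac{\log r+\Delta}{n}$.

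For the lower bound on $\KL(Z\Vert p)$ — the same step that establishes $Z<p$ — I would use independence and $(1-x)^r\le e^{-rx}$ to write $\mathbb{P}(Z>b)=(1-\mathbb{P}(X_1\le b))^r\le\exp(-r\,2^{-n\KL(b\Vert p)-C})$. Forcing the right side to equal $\delta/2$ pins down the target $\KL(b\Vert p)=\tfrac{\log r - C - \log\ln\frac{2}{\delta}}{n}$, and I would take $b$ to be the point of $[0,p]$ realizing $\min\{\,\tfrac{\log r - C - \log\ln\frac{2}{\delta}}{n},\ \KL(0\Vert p)\,\}$, which still gives $\mathbb{P}(Z>b)\le\delta/2$. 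Since $\ln\tfrac{2}{\delta}\le\tfrac{2}{\delta}$ gives $\log\ln\tfrac{2}{\delta}\le\log\tfrac{2}{\delta}$, this target is $\ge\tfrac{\log r-\Delta}{n}$ (this is the "widening" to the symmetric $\pm\Delta$), and because $\Delta<\log r$ it is strictly positive, forcing $b<p$. Thus with probability $\ge 1-\delta/2$ we get $Z\le b<p$ and $\KL(Z\Vert p)\ge\KL(b\Vert p)\ge\tfrac{\log r-\Delta}{n}$; intersecting with the previous event and applying the union bound delivers, with probability $\ge 1-\delta$, both $Z<p$ and $\KL(Z\Vert p)\in\tfrac{\log r\pm\Delta}{n}$.

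The exceptional case is exactly where the lower target crosses the ceiling $\KL(0\Vert p)$. When $\KL(0\Vert p)<\tfrac{\log r-\Delta}{n}$, no $Z\in[0,p]$ can meet the lower bound, so instead I would argue $Z=0$ directly: since $\mathbb{P}(X_1=0)=(1-p)^n=2^{-n\KL(0\Vert p)}$, independence gives $\mathbb{P}(Z>0)=(1-2^{-n\KL(0\Vert p)})^r\le\exp(-r\,2^{-n\KL(0\Vert p)})$, and the hypothesis pushes the exponent $r\,2^{-n\KL(0\Vert p)}$ above $2^{\Delta}\ge 2/\delta$, whence $\mathbb{P}(Z>0)<e^{-2/\delta}<\delta$. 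I expect the main obstacle to be precisely this bookkeeping around realizability of the thresholds $a$ and $b$ against the ceiling $\KL(0\Vert p)$, together with verifying the numerical slack ($\log\ln\tfrac{2}{\delta}\le\log\tfrac{2}{\delta}$ and $2^{\Delta}\ge 2/\delta$) that lets the asymmetric Chernoff-type expressions be absorbed into the symmetric $\pm\Delta$; the probabilistic content is only Lemma~\ref{lemma} plus a union bound.
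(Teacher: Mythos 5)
Your proof is correct and follows essentially the same route as the paper's: union bound plus the upper bound of Lemma~\ref{lemma} to control $\mathbb{P}(Z\le a)$, independence plus $(1-x)^r\le e^{-rx}$ and the lemma's lower bound to control $\mathbb{P}(Z>b)$, monotonicity of $\KL(\cdot\Vert p)$ on $[0,p]$ to translate these into the two-sided KL statement, and the same realizability case analysis against the ceiling $\KL(0\Vert p)$. Your handling of the edge cases (taking the minimum of the target with $\KL(0\Vert p)$, and computing $\mathbb{P}(X_1=0)=(1-p)^n$ exactly in the $Z=0$ case) is a slight streamlining of the paper's explicit case split, not a different argument.
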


\begin{proof}
    Consider any interval $[a,b]$, such that $a\leq b< p$. Define the following events:
    \begin{eqnarray*}
        U &=& \{ \KL(Z\Vert p) \leq \KL(a\Vert p) \}, \\
        L &=& \{ \KL(Z\Vert p) \geq \KL(b\Vert p) \}, \\
        A &=& \{ Z \geq a \},\textrm{ and} \\
        B &=& \{ Z \leq b \}.
    \end{eqnarray*}
    \begin{center}
       \begin{tikzpicture}
            % Main number line
            \draw[black, thick] (0,0) -- (10,0);
            % Ticks and labels for a, b, and p
            \draw[thick] (2,0.2) -- (2,-0.2) node[below] {$a$};
            \draw[thick] (5,0.2) -- (5,-0.2) node[below] {$b$};
            \draw[thick] (7,0.2) -- (7,-0.2) node[below] {$p$};
            % Magenta line (B)
            \draw[magenta, thick] (0,-1) -- (5,-1) node[midway, above] {$B$};
            % Cyan line (A)
            \draw[cyan, thick] (2,-1.5) -- (10,-1.5) node[midway, above] {$A$};
            % Blue line (U)
            \draw[blue, thick] (2,-2) -- (9,-2) node[midway, above] {$U$};
            % Red lines (L)
            \draw[red, thick] (0,-2.5) -- (5,-2.5);
            \draw[red, thick] (8,-2.5) -- (10,-2.5);
            \node[above] at (4,-2.5) {\textcolor{red}{$L$}};
        \end{tikzpicture}    
    \end{center}
    By the monotonicity of the KL divergence, we have that $B \subseteq L$ and $A \cap B \subseteq U$ (but note that we generally \emph{don't} have $A \subseteq U$). This means that $A \cap B \subseteq U \cap L$, and consequently:
    \[
        \mathbb{P}(U \cap L) \geq  \mathbb{P}(A \cap B)  = 1 - \mathbb{P}(A^\comp) - \mathbb{P}(B^\comp).
    \]
    The theorem will follow from choices of $a$ and $b$ that help bound $\mathbb{P}(A^\comp)$ and $\mathbb{P}(B^\comp)$.
    
    Using the fact that $a<p$, along with the union bound and Lemma \ref{lemma}, we have 
    \[
    \mathbb{P}(A^\comp) =\mathbb{P}(Z < a) \leq \mathbb{P}(Z \leq a) \leq r \cdot\mathbb{P}(X_1 \leq a) \leq r \cdot 2^{-n\KL(a\Vert p) + 4\log (n+1) + \left[ \log \frac{p}{1-p} \right]_+}.
    \]
    Suppose $\KL(0\Vert p) \geq \frac{\log r + \Delta(\delta, p, n)}{n}$. Since $\KL(p\Vert p) = 0$, and KL is continuous by its first argument, by intermediate value theorem, we can choose $0 \leq a <p$ such that
    \begin{equation} \label{eq:KLap} 
    \begin{split}
    \KL(a\Vert p)  &= \frac{\log r + \Delta(\delta, p, n)}{n}\\
    &= \frac{\log r + \log \frac{1}{\delta/2} + 4\log (n+1) + \left[ \log \frac{p}{1-p} \right]_+}{n},
    \end{split}
    \end{equation}
     which gives $r \cdot 2^{-n\KL(a\Vert p) + 4\log (n+1) + \left[ \log \frac{p}{1-p} \right]_+}=\delta/2$.
    Thus, by choosing $0 \leq a < p$ according to \eqref{eq:KLap}, we get $\mathbb{P}(A^\comp) \leq \frac{\delta}{2}$.
    
    If $\KL(0\Vert p) < \frac{\log r + \Delta(\delta, p, n)}{n}$, in other words, there is no $0 \leq a < p$ satisfying \eqref{eq:KLap}, then take $a = 0$. And in this case, the upper bound of the theorem trivially holds for any $Z < p$ because
    \begin{align*}
        \mathbb{P}\left(\KL(b \Vert p) \leq \KL(Z\Vert p) \leq \KL(0 \Vert p) < \frac{\log r + \Delta(\delta, p, n)}{n}\right) &\geq \mathbb{P}(0 \leq Z \leq b)
        = 1 - \mathbb{P}(Z > b).
    \end{align*}

    On the other hand, by the independence of data points, we have:
    \begin{equation}\label{eq:indep}
    \mathbb{P}(B^\comp) = \mathbb{P}(Z > b) = (1-\mathbb{P}(X_1 \leq b))^r.
    \end{equation}
    Using the inequality $\forall x \in [0,1], k > 0: (1-x)^k \leq e^{-kx}$ and Lemma \ref{lemma}, we have
    \begin{equation}\label{eq:expbd}
    (1-\mathbb{P}(X_1 \leq b))^r \leq \exp\left(-r \cdot \mathbb{P}(X_1 \leq b)\right) \leq \exp \left(-r \cdot 2^{-n\KL(b\Vert p) - 4\log (n+1) - \left[ \log \frac{p}{1-p} \right]_+}\right).
    \end{equation}
    Suppose $\KL(0\Vert p) \geq \frac{\log r - \log \ln \frac{1}{\delta/2} - 4\log (n+1) - \left[ \log \frac{p}{1-p} \right]_+}{n}$, again by the intermediate value theorem, we can choose $0 \leq b < p$ such that 
    \begin{equation} \label{eq:KLbp}
    \KL(b\Vert p) = \frac{\log r - \log \ln \frac{1}{\delta/2} - 4\log (n+1) - \left[ \log \frac{p}{1-p} \right]_+}{n}, 
    \end{equation}
    which gives $\exp \left(-r \cdot 2^{-n\KL(b\Vert p) - 4\log (n+1) - \left[ \log \frac{p}{1-p} \right]_+}\right) = \delta/2$.
    Thus, by choosing $0 \leq b <p$ according to \eqref{eq:KLbp}, we get $\mathbb{P}(B^\comp) \leq \frac{\delta}{2}$.

    If $\KL(0\Vert p) < \frac{\log r - \log \ln \frac{1}{\delta/2} - 4\log (n+1) - \left[ \log \frac{p}{1-p} \right]_+}{n}$, in other words, there is no $0\leq b < p$ satisfying \eqref{eq:KLbp}, then by combining \eqref{eq:indep} and \eqref{eq:expbd},
    \[
        \mathbb{P}(Z > 0)\leq \exp \left(-r \cdot 2^{-n\KL(0\Vert p) - 4\log (n+1) - \left[ \log \frac{p}{1-p} \right]_+}\right)
        \leq \frac{\delta}{2}.
    \]
    So in this case, we have with probability $\geq \frac{\delta}{2} > 1 - \delta$, $Z = 0$.

    Therefore, by choosing $a$ and $b$ as above, we get
    \begin{equation*}
        \mathbb{P}\Big(\KL(Z\Vert p) \in \big(\KL(b\Vert p), \KL(a\Vert p)\big)\Big) = \mathbb{P}(U\cap L) \geq 1 - \mathbb{P}(A^\comp) - \mathbb{P}(B^\comp) \geq 1-\delta,    
    \end{equation*}
    with $\KL(a\Vert p)$ and $\KL(b\Vert p)$ as in \eqref{eq:KLap} and \eqref{eq:KLbp} respectively. Except that if $\KL(0\Vert p) < \frac{\log r - \Delta(\delta, p, n)}{n}$, then with probability $> 1 - \delta$, $Z = 0$.
 
    The theorem follows by widening this interval, to get a symmetric expression.
\end{proof}

\begin{remark}
    The term $\left[\log \frac{p}{1-p}\right]_+$ is not actually needed in the upper bounds in Lemma \ref{lemma} and  Theorem \ref{thm1}, and is included in the Theorem statement only for the sake of a symmetric and concise statement.
\end{remark}

\begin{remark}
The upper bound in Theorem \ref{thm1} does not rely on $X_i$ being independent, nor even identically distributed. In fact, as long as $X_i \sim \frac{1}{n}\textnormal{Bin}(n, p_i)$ marginally with $p_i \geq p$, the upper bound still holds.  This union-bound based upper bound can be viewed as a (one sided) concentration guarantee, thinking of each $X_i$ as an empirical average with mean $p$, and ensuring that all $n$ empirical measurements are close to their expectation (or rather, not much smaller than their expectation).  In a machine learning context, $X_i$ would correspond to the empirical error of predictor $i$, with population error $p$ (or in the more general non-identically-distributed case, with population error $p_i\geq p$).  Viewed this way, the upper bound in Theorem \ref{thm1} is a special case\footnote{Theorem \ref{thm1} is a special case where we take a discrete uniform prior over the $r$ predictors, and are concerned only with point-mass posteriors.} of the PAC-Bayes bound as presented by \citet[][Equation (4)]{mcallester2003simplified} following \citet{langford2001bounds}.  The upper bound is thus similar to, but slightly tighter than, common concentration guarantees based on a similar union-bound argument, but relying on the Hoeffding and Bernstein bounds on the Binomial CDF instead of the KL-based bound of Lemma \ref{lemma}.  Theorem \ref{thm1} shows the KL-based bound is tight, and provides a simple matching lower bound.

    % It is worth mentioning that the upper bound of $\KL(Z\Vert p)$ proved in Theorem 1 is a special case of the concentration inequality we use for $\textnormal{MDL}_1$ agnostic analysis: $\textnormal{ KL}(L_S(h) \Vert  L(h)) \leq \frac{-\log \pi(h) + O(\log \frac{n}{\delta})}{n}$, where $\pi$ is a prior over $h$, $L_S(h) = \frac{1}{n} \sum_{j = 1}^n \mathbbm{1}_{h(x_j) \neq y_j}$ and $L(h) = \mathbb{P}(h(x) \neq y)$. So $L_S(h) \sim \frac{1}{n}\textnormal{Bin}(n, p)$ with $p = L(h)$. 

\end{remark}

%\BibTexMode{%
%   \bibliographystyle{alpha}
%   \bibliography{template}
%}
%\BibLatexMode{\printbibliography}

\bibliographystyle{plainnat}
\bibliography{reference}

\end{document}